\newcommand{\Hom}{\operatorname{Hom}\nolimits}
\renewcommand{\Im}{\operatorname{Im}\nolimits}
\newcommand{\Ker}{\operatorname{Ker}\nolimits}
\newcommand{\Tor}{\operatorname{Tor}\nolimits}
\newcommand{\Ext}{\operatorname{Ext}\nolimits}
\newcommand{\HH}{\operatorname{HH}\nolimits}
\newcommand{\op}{\operatorname{op}\nolimits}
\newcommand{\e}{\operatorname{e}\nolimits}
\newcommand{\cha}{\operatorname{char}\nolimits}
\newtheorem{theorem}{Theorem}[section]
\newtheorem{proposition}[theorem]{Proposition}
\theoremstyle{definition}
\theoremstyle{definition}
\theoremstyle{definition}
\theoremstyle{definition}
\theoremstyle{definition}
\theoremstyle{definition}
\theoremstyle{remark}
\theoremstyle{definition}
\theoremstyle{definition}
\begin{document}
\title{(Co)homology of quantum complete intersections}
\author{Petter Andreas Bergh \& Karin Erdmann}
\address{Petter Andreas Bergh \newline Mathematical Institute \\ 24-29
  St.\ Giles \\ Oxford \\ OX1 3LB \\ United Kingdom \newline
  \emph{Present address:} \newline Institutt for matematiske fag \\
  NTNU \\ N-7491 Trondheim \\ Norway}
\email{bergh@math.ntnu.no}
\address{Karin Erdmann \newline Mathematical Institute \\ 24-29
  St.\ Giles \\ Oxford \\ OX1 3LB \\ United Kingdom}
\email{erdmann@maths.ox.ac.uk}

\date{\today}

\dedicatory{Dedicated to Lucho Avramov on the occasion of his sixtieth birthday}

\subjclass[2000]{16E40, 16S80, 16U80, 81R50}

\keywords{Hochschild (co)homology, quantum complete intersections}

\maketitle

\begin{abstract}
We construct a minimal projective bimodule resolution for every
finite dimensional quantum complete intersection of codimension two.
Then we use this resolution to compute both the Hochschild
cohomology and homology for such an algebra. In particular, we show
that the cohomology vanishes in high degrees, while the homology is
always nonzero.
\end{abstract}

\section{Introduction}\label{intro}

The notion of quantum complete intersections originates from work by
Manin (cf.\ \cite{Manin}), who introduced the concept of \emph{quantum
symmetric algebras}. These algebras were used by Avramov, Gasharov
and Peeva in \cite{Avramov} to study modules behaving homologically
as modules
over commutative complete intersections. In particular, they
introduced \emph{quantum regular sequences} of endomorphisms of
modules, thus generalizing the classical notion of regular sequences.

In \cite{Benson}, Benson, Erdmann and Holloway defined and studied a
new rank variety theory for modules over finite dimensional quantum
complete intersections. For this theory to work, it is essential
that the commutators defining the quantum complete intersection be
roots of unity, so that a linear combination of the generators
behave itself as a generator. In this setting, at least for quantum
complete intersections of codimension two, the Hochschild cohomology
ring is infinite dimensional, and a priori there might be
connections between rank varieties and the support varieties defined
by Snashall and Solberg (cf.\ \cite{Snashall}, \cite{Erdmann}).

Whether or not the higher Hochschild cohomology groups of a finite
dimensional algebra of infinite global dimension can vanish, known
as ``Happel's question", was unknown until the appearance of
\cite{Buchweitz}. In that paper, the authors constructed a four
dimensional selfinjective algebra whose total Hochschild cohomology
is five dimensional, thus giving a negative answer to Happel's
question. The algebra they constructed is the smallest possible
noncommutative quantum complete intersection.

In this paper we study finite dimensional quantum complete
intersections of codimension two. For such an algebra, we construct
a minimal projective bimodule resolution, and use this to compute
the Hochschild homology and cohomology. In particular, we show that
the higher Hochschild cohomology groups vanish if and only if the
commutator element is not a root of unity, whereas the Hochschild
homology groups never vanish. Thus we obtain a large class of
algebras having the same homological properties as the algebra used
in \cite{Buchweitz}.

\section{The minimal projective resolution}\label{resolution}

Throughout this paper, let $k$ be a field and $q \in k$ a nonzero
element. In the main results this element is assumed not to be a root
of unity, implying indirectly that $k$ is an infinite field. We fix
two integers $a,b \ge 2$, and denote by $A$ the $k$-algebra
$$A = k \langle X,Y \rangle / (X^a,XY-qYX,Y^b).$$
This is a finite dimensional algebra of dimension $ab$, and it is
justifiably a quantum complete intersection of codimension $2$; it is
the quotient
of the quantum symmetric algebra $k \langle X,Y \rangle / (XY-qYX)$ by
the quantum regular sequence $X^a,Y^b$. We denote the generators of
$A$ by $x$ and $y$, and use the set
$$\{ y^i x^j \}_{0 \leq i < b, \hspace{1mm} 0 \leq j < a}$$
as a $k$-basis. The opposite algebra of $A$ is denoted by $A^{\op}$,
and the enveloping algebra $A \otimes_k A^{\op}$ by $A^{\e}$.

We now construct explicitly a minimal projective bimodule resolution
$$\mathbb{P} \colon \dots \to P_2 \xrightarrow{d_2} P_1 \xrightarrow{d_1} P_0
\xrightarrow{\mu} A \to 0,$$ in which $P_n$ is  free of rank $n+1$,
viewing the bimodules as left $A^{\e}$-modules. The generators $1
\otimes 1$ of $P_n$ are labeled $\epsilon (i,j)$ for $i,j \ge 0$,
such that
$$P_n = \bigoplus_{i+j=n} A^{\e} \epsilon (i,j).$$
For each $s \geq 0$ define the following four elements of $A^{\e}$:
\begin{eqnarray*}
\tau_1 (s) & = & q^s (1 \otimes x) - (x \otimes 1) \\
\tau_2 (s) & = & (1 \otimes y) - q^s (y \otimes 1) \\
\gamma_1 (s) & = & \sum_{j=0}^{a-1} q^{js} (x^{a-1-j} \otimes
x^j) \\
\gamma_2 (s) & = & \sum_{j=0}^{b-1} q^{js} (y^j \otimes y^{b-1-j}).
\end{eqnarray*}
Let $P_0 \xrightarrow{\mu} A$ be the multiplication map $w \otimes
z \mapsto wz$. The kernel of this map is generated by $\tau_1(0)$
and $\tau_2(0)$. Now let $R_1$ and $R_2$ be the commutative
subalgebras of $A$ generated by $x$ and $y$, respectively. The
annihilator of $\tau_i(0)$, viewed as an element of $R_i^{\e}$, is
$\gamma_i(0)$, and the complex
$$\cdots \to R_i^{\e} \xrightarrow{\tau_i(0)} R_i^{\e}
\xrightarrow{\gamma_i(0)} R_i^{\e} \xrightarrow{\tau_i(0)}
R_i^{\e} \xrightarrow{\mu} R_i \to 0$$ is a minimal projective
bimodule resolution of $R_i$ (cf.\ \cite{Holm}). Now for $i=1,2$,
define an algebra automorphism $R_i^{\e} \xrightarrow{\sigma_i}
R_i^{\e}$ by
$$\sigma_1 \colon \begin{array}{l}
               x \otimes 1 \mapsto x \otimes 1 \\
               1 \otimes x \mapsto q(1 \otimes x)
             \end{array}
\hspace{.5cm} \sigma_2 \colon \begin{array}{l}
                               y \otimes 1 \mapsto q(y \otimes1)\\
                               1 \otimes y \mapsto 1 \otimes y.
                               \end{array}$$
When we twist the above resolution by the automorphism
$\sigma_i^s$ for some $s \geq 0$, then multiplication by
$\tau_i(0)$ and $\gamma_i(0)$ become multiplication by $\tau_i(s)$
and $\gamma_i(s)$, respectively. We denote this twisted resolution
by ${\bf{R}}_i(s)$.

We now define a double complex
$$\xymatrix{
\vdots \ar[d] & \vdots \ar[d] & \vdots \ar[d] \\
A^{\e} \epsilon (0,2) \ar[d] & A^{\e} \epsilon (1,2) \ar[d] \ar[l]
 & A^{\e} \epsilon (2,2) \ar[d] \ar[l] & \cdots \ar[l] \\
A^{\e} \epsilon (0,1) \ar[d] & A^{\e} \epsilon (1,1) \ar[d] \ar[l]
 & A^{\e} \epsilon (2,1) \ar[d] \ar[l] & \cdots \ar[l] \\
A^{\e} \epsilon (0,0) & A^{\e} \epsilon (1,0) \ar[l]
 & A^{\e} \epsilon (2,0) \ar[l] & \cdots \ar[l]}$$
whose total complex $\mathbb{P}$ turns out to be the projective
bimodule resolution we are seeking. Along row $2s$ we use the
resolution ${\bf{R}}_1(bs)$, and along row $2s+1$ we use the
resolution ${\bf{R}}_1(bs+1)$. Explicitly, the row maps are given by
\begin{eqnarray*}
\epsilon (2r,2s) & \mapsto & \gamma_1(bs) \epsilon (2r-1,2s) \\
\epsilon (2r+1,2s) & \mapsto & \tau_1(bs) \epsilon (2r,2s) \\
\epsilon (2r,2s+1) & \mapsto & \gamma_1(bs+1) \epsilon (2r-1,2s+1) \\
\epsilon (2r+1,2s+1) & \mapsto & \tau_1(bs+1) \epsilon (2r,2s+1).
\end{eqnarray*}
Similarly, along column $2r$ we use the resolution ${\bf{R}}_2(ar)$,
and along column $2r+1$ we use the resolution ${\bf{R}}_2(ar+1)$,
introducing a sign in the odd columns. The column maps are therefore
given by
\begin{eqnarray*}
\epsilon (2r,2s) & \mapsto & \gamma_2(ar) \epsilon (2r,2s-1) \\
\epsilon (2r,2s+1) & \mapsto & \tau_2(ar) \epsilon (2r,2s) \\
\epsilon (2r+1,2s) & \mapsto & - \gamma_2(ar+1) \epsilon (2r+1,2s-1) \\
\epsilon (2r+1,2s+1) & \mapsto & - \tau_2(ar+1) \epsilon (2r+1,2s).
\end{eqnarray*}
It is straightforward to verify that these maps indeed define a
double complex; all the four different types of squares commute. The
transpose of the matrices defining the maps in the resulting double
complex are given by
$$\left ( \begin{smallmatrix}
\gamma_1(0) & -\tau_2(as+1) & 0 & 0 & 0 & 0 & \cdots & 0 \cr \cr \cr
0& \tau_1(1) & \gamma_2(as)& 0&0&0& \cdots & 0 \cr \cr \cr 0&0&
\gamma_1(b) & -\tau_2(a[s-1]+1) & 0 &0& \cdots & 0 \cr \cr \cr
0&0&0&\tau_1(b+1) & \gamma_2(a[s-1]) &0& \cdots & 0 \cr \cr \cr
\vdots & \vdots && \ddots & \ddots & \ddots &\ddots & \vdots \cr
 &&&&&& \cr
 &&&&&& \cr
0&0& \cdots && 0 & \gamma_1(bs) & -\tau_2(1)&0 \cr \cr \cr 0&0&
\cdots &&0&0& \tau_1(bs+1)& \gamma_2(0) \end{smallmatrix} \right )
$$
for the map at stage $2(s+1)$, and
$$\left ( \begin{smallmatrix}
\tau_1(0) & \tau_2(as) & 0 & 0 &0 &0 & \cdots & 0 \cr \cr \cr 0&
\gamma_1(1) & -\gamma_2(a[s-1]+1) & 0&0&0 & \cdots & 0 \cr \cr \cr
0&0& \tau_1(b) & \tau_2(a[s-1]) & 0 & 0 & \cdots & 0 \cr \cr \cr
0&0&0& \gamma_1(b+1) & -\gamma_2(a[s-2]+1) &0& \cdots & 0 \cr \cr
\cr \vdots & \vdots && \ddots & \ddots & \ddots &\ddots & \vdots \cr
 &&&&&& \cr
 &&&&&& \cr
0&0& \cdots &&0& \gamma_1(b[s-1]+1) & -\gamma_2(1)&0 \cr \cr \cr
0&0& \cdots &&0&0& \tau_1(bs)& \tau_2(0) \end{smallmatrix} \right )
$$
for the map at stage $2s+1$.

Now for each $n \ge 0$ denote the generator $\epsilon (i,n-i)$ by
$f^n_i$, so that the $n$th bimodule in the total complex
$\mathbb{P}$ is
$$P_n = \bigoplus_{i=0}^n A^{\e} f^n_i,$$
the free $A^{\e}$-module of rank $n+1$ having generators $\{ f^n_0,
f^n_1, \dots, f^n_n \}$. Then the maps $P_n \xrightarrow{d_n}
P_{n-1}$ in $\mathbb{P}$ are given by
\begin{eqnarray*}
d_{2t} \colon f^{2t}_i & \mapsto & \left \{
\begin{array}{lcll}
 \gamma_2 ( \frac{ai}{2} ) f^{2t-1}_i & + & \gamma_1 (
 \frac{2bt-bi}{2} ) f^{2t-1}_{i-1} & \text{ for $i$ even} \\
 \\
 - \tau_2 ( \frac{ai-a+2}{2} ) f^{2t-1}_i & + & \tau_1 (
 \frac{2bt-bi-b+2}{2} ) f^{2t-1}_{i-1} & \text{ for $i$ odd}
\end{array} \right. \\
& \vspace{.5cm} & \\
d_{2t+1} \colon f^{2t+1}_i & \mapsto & \left \{
\begin{array}{llll}
\tau_2 ( \frac{ai}{2} ) f^{2t}_i & + & \gamma_1 ( \frac{2bt-bi+2}{2}
)
f^{2t}_{i-1} & \text{ for $i$ even} \\
\\
- \gamma_2 ( \frac{ai-a+2}{2} ) f^{2t}_i & + & \tau_1 (
\frac{2bt-bi+b}{2} ) f^{2t}_{i-1} & \text{ for $i$ odd,}
\end{array} \right.
\end{eqnarray*}
where we use the convention $f^n_{-1} = f^n_{n+1} = 0$. The
following result shows that the complex is exact when $q$ is not a
root of unity.

\begin{proposition}\label{exact}
The complex $\mathbb{P}$ is exact and is therefore a minimal
projective resolution
$$\mathbb{P} \colon \cdots \to P_2 \xrightarrow{d_2} P_1 \xrightarrow{d_1} P_0
\xrightarrow{\mu} A \to 0$$ of the left $A^{\e}$-module $A$.
\end{proposition}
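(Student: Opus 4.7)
The plan is to establish three things in sequence: that $\mathbb{P}$ is a chain complex, that it is exact in positive degrees with $H_0(\mathbb{P}) = A$, and that it is minimal. The complex condition and minimality will be essentially free once the double complex is in hand. Since $\mathbb{P}$ is the total complex of the double complex described above, and since each row is a twist $\mathbf{R}_1(s')$ with $s' \in \{bt, bt+1\}$ and each column a twist $\mathbf{R}_2(t')$ with $t' \in \{ar, ar+1\}$, the partial differentials separately square to zero. Combined with the already-noted commutativity of the four types of squares and the sign introduced in odd columns, one then verifies $(d^{\text{horz}} + d^{\text{vert}})^2 = 0$. Minimality is immediate: every entry of every matrix representing $d_n$ is, up to sign, a $\tau_i(s)$ or $\gamma_i(s)$, each a $k$-combination of $x \otimes 1, 1 \otimes x, y \otimes 1, 1 \otimes y$, all lying in $\rad A^{\e}$. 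The $H_0$ calculation is equally direct: $d_1$ sends $f^1_0 \mapsto \tau_2(0) f^0_0$ and $f^1_1 \mapsto \tau_1(0) f^0_0$, so $\Im d_1 = A^{\e}\tau_1(0) + A^{\e}\tau_2(0) = \Ker \mu$.

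The heart of the argument is exactness in positive degrees, which I would establish by running the spectral sequence of the double complex filtered by rows. The structural fact to exploit is that $A^{\e}$ is free as a right $R_1^{\e}$-module, with basis $\{(y \otimes 1)^{j_1}(1 \otimes y)^{j_2}\}_{0 \le j_1, j_2 < b}$; although $(y \otimes 1)(x \otimes 1) = q^{-1}(x \otimes 1)(y \otimes 1)$ twists the coefficients, right multiplication by $R_1^{\e}$ preserves the chosen free basis. Consequently each row of the double complex is obtained from a twisted resolution $\mathbf{R}_1(s')$ by extension of scalars along $R_1^{\e} \hookrightarrow A^{\e}$, and is therefore exact in positive horizontal degree, with row homology concentrated in horizontal degree $0$ equal to $A^{\e} \otimes_{R_1^{\e}} R_1$. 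The $E^1$-page collapses to a single column of such quotients, and a symmetric extension-of-scalars argument using that $A^{\e}$ is free as a right $R_2^{\e}$-module (basis $\{(x \otimes 1)^{i_1}(1 \otimes x)^{i_2}\}_{0 \le i_1, i_2 < a}$) then shows that the induced vertical differentials form a complex exact in positive degree whose augmentation yields $A$.

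The main technical obstacle lies in controlling the twists at the $E^1$-page: one must verify that, under the identification of row homology with $A^{\e} \otimes_{R_1^{\e}} R_1$, the induced vertical differentials are precisely the $\sigma_2^{ar}$- and $\sigma_2^{ar+1}$-twisted multiplications by $\tau_2$ and $\gamma_2$ appearing in $\mathbf{R}_2(ar)$ and $\mathbf{R}_2(ar+1)$. This compatibility is exactly why the specific shifts $bs, bs+1$ along the rows and $ar, ar+1$ along the columns were built into the construction; it is enforced by the quantum commutation $xy = qyx$ in $A$, which is precisely what moves an $x$-factor past a $y$-factor in $A^{\e}$ at the cost of a power of $q$ and so converts the $R_2$-maps into their $\sigma_2$-twisted counterparts when transported through $A^{\e} \otimes_{R_1^{\e}} R_1$.
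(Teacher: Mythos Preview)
Your approach is sound in outline and genuinely different from the paper's. The paper never touches the spectral sequence; instead it reduces to the one-sided complex $\mathbb{P}\otimes_A k$, invoking a result of Green--Snashall that exactness there forces exactness of $\mathbb{P}$, and then verifies exactness of $\mathbb{P}\otimes_A k$ by an explicit dimension count. Concretely, it sets $U_i=A\,\widehat d_n(e_i)$, computes $\dim U_i$ and each $\dim(U_j\cap U_{j+1})$ via an elementary lemma about products $z_1x^s=z_2y^t$ in $A$, proves $(U_0+\cdots+U_s)\cap U_{s+1}=U_s\cap U_{s+1}$, and assembles these into $\dim\Im\widehat d_n$. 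Your structural route trades this bookkeeping for the freeness of $A^{\e}$ over $R_1^{\e}$; the paper's route is elementary and self-contained but combinatorially heavier.

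There is, however, a real inaccuracy in your description of the $E^1$-page which you should repair. After collapsing rows, $E^1$ is concentrated in horizontal degree $0$, with terms $Q_{s_j}:=A^{\e}/A^{\e}\tau_1(s_j)$, and the induced vertical differentials come from the \emph{column-zero} maps, i.e.\ right multiplication by $\tau_2(0)$ and $\gamma_2(0)$ --- not by the twisted $\tau_2(ar)$, $\gamma_2(ar{+}1)$, which live in the other columns and have already been consumed. Moreover the $Q_{s_j}$ are pairwise nonisomorphic as left $A^{\e}$-modules (the left action of $x\otimes 1$ depends on $s_j$), so the $E^1$ column is not literally $A^{\e}\otimes_{R_2^{\e}}\mathbf{R}_2(t)$ for any $t$, and the ``symmetric extension-of-scalars'' argument does not apply verbatim. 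The fix is short: every $Q_{s_j}$ has the common $k$-basis $\{y^{j_1}\otimes y^{j_2}x^{i}\}$, on which right multiplication by $\tau_2(0)$ and $\gamma_2(0)$ acts only on the $(j_1,j_2)$-part and is independent of $s_j$; equivalently, each $Q_{s_j}$ is free as a left $R_2^{\e}$-module on $\{1\otimes x^{i}\}_{0\le i<a}$, and in these terms the $E^1$ column is $\mathbf{R}_2(0)^{\oplus a}$, hence exact with $H_0\cong A$. With that adjustment your argument goes through, and your minimality and $H_0$ observations are fine as stated.
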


\begin{proof}
We will show that the complex $\mathbb{P} \otimes_A k$ is exact and
a minimal projective resolution of the $A$-module $k$. Then the
arguments in \cite{Green} shows that the complex $\mathbb{P}$ is
exact.

When applying $- \otimes_A k$ to $A^{\e} = A \otimes_k A^{\op}$, the
elements $x$ and $y$ in $A^{\op}$ become zero, and so the elements
$\tau_i (s) \otimes 1$ and $\gamma_i (s) \otimes 1$ are just given
by
\begin{eqnarray*}
\tau_1 (s) \otimes 1 & = & - (x \otimes 1) \\
\tau_2 (s) \otimes 1 & = & -q^s(y \otimes 1) \\
\gamma_1 (s) \otimes 1 & = & (x^{a-1} \otimes 1) \\
\gamma_2 (s) \otimes 1 & = & q^{(b-1)s} ( y^{b-1} \otimes 1 ).
\end{eqnarray*}
We shall identify these elements with $-x, -q^sy, x^{a-1}$ and
$q^{(b-1)s}y^{b-1}$, respectively. Moreover, whenever the commutator
element $q$ is involved, its precise power does not affect the
dimensions of the vector spaces we are considering, so we shall
write $q^*$ for simplicity.

Fix a number $n \ge 0$. The free bimodule $P_n$ has generators
$\epsilon (i,j)$, with $n=i+j$ and $i,j \ge 0$. When the degree is
not ambiguous, we shall denote the element $\epsilon (i,j) \otimes 1
\in P_n \otimes_A k$ by $e_j$, and we shall denote the map $P_n
\otimes_A k \xrightarrow{d_n \otimes 1} P_{n-1} \otimes_A k$ by
$\widehat{d}_n$. Moreover, we denote by $U_i$ the left $A$-submodule
of $P_{n-1} \otimes_A k$ generated by $\widehat{d}_n (e_j)$, so that
$$\Im \widehat{d}_n = U_0 + \cdots + U_n \subseteq P_{n-1} \otimes_A
k.$$

We now compute the dimensions of these modules $U_i$. Assume first
that $n$ is even. Then
\begin{eqnarray*}
U_0 & = & Ax^{a-1}e_0 \\
U_i & = & A \left [ (q^*y)e_{i-1} + (q^*x)e_i \right ] \text{ for
odd } 0 < i < n \\
U_i & = & A \left [ (q^*y^{b-1})e_{i-1} + (q^*x^{a-1})e_i \right ]
\text{ for even } 0 < i < n \\
U_n & = & Ay^{b-1}e_{n-1},
\end{eqnarray*}
and so we see that $\dim U_0 =b, \dim U_n= a$, and otherwise $\dim U_i = ab-1$ and
$\dim U_j = a+b+1$ for $i$ odd and $j$ even. When $n$ is odd, then
\begin{eqnarray*}
U_0 & = & Axe_0 \\
U_i & = & A \left [ (-q^*y)e_{i-1} + (q^*x^{a-1})e_i \right ]
\text{ for odd } 0 < i < n \\
U_i & = & A \left [ (q^*y^{b-1})e_{i-1} + (q^*x)e_i \right ]
\text{ for even } 0 < i < n \\
U_n & = & Aye_{n-1},
\end{eqnarray*}
and so in this case we see that $\dim U_0 = b(a-1), \dim U_n =
a(b-1)$, and otherwise $\dim U_i = a(b-1)+1$ and $\dim U_j = b(a-1)+1$ for $i$ odd
and $j$ even.

Our aim is to compute the dimensions of various intersections and
sums obtained from the modules $U_i$. In order to do this, we need
the following fact: for any elements $z_1, z_2 \in A$, the
implication
\begin{equation*}\label{implikasjon}
z_1x^s = z_2y^t \Longrightarrow z_1 = v_1y^t+w_1x^{a-s} \text{ and
} z_2 = v_2x^s + w_2y^{b-t} \tag{$\dagger$}
\end{equation*}
holds, where $v_i,w_i$ are some elements in $A$ depending on $z_1$
and $z_2$. To see this, write $z_1 = g_0 + g_1y + \cdots +
g_{b-1}y^{b-1}$ and $z_2 = h_0 + h_1y + \cdots + h_{b-1}y^{b-1}$,
where the $g_i$ and $h_i$ are polynomials in $x$. Then
$$\sum_i h_iy^{t+i} = z_2y^t = z_1x^s = \sum_j
(q^{-js}g_jx^s)y^j,$$ and comparing the coefficients of $y^j$ we
find that $g_jx^s =0$ for $j<t$. Therefore, for these values of
$j$, the polynomial $g_j$ must be a multiple of $x^{a-s}$. Then we
can write $\sum_{j<t}g_jy^j = w_1x^{a-s}$ for some $w_1 \in A$,
giving
$$z_1 = \sum_{j<t}g_jy^j + \sum_{j \ge t}g_jy^j = w_1x^{a-s} +
v_1y^t,$$ where $v_1 = \sum_{j \ge t}g_jy^{j-t}$. This proves the
statement for $z_1$, the proof for $z_2$ is similar.

We now compute the intersections of pairs of the modules $U_i$.
Suppose $n$ is even, and fix an even integer $0 \le j \le n$. If
$u$ belongs to $U_j \cap U_{j+1}$, then there are elements $z_1,
z_2 \in A$ such that
$$u = z_1 \left [ (q^*y^{b-1})e_{j-1} + (q^*x^{a-1})e_j \right ] =
z_2 \left [ (q^*y)e_j + (q^*x)e_{j+1} \right ].$$ The coefficients
of $e_{j-1}$ and $e_{j+1}$ must be zero, whereas those of $e_j$
must be equal, giving $(z_1q^*)x^{a-1} = (z_2q^*)y$. By
(\ref{implikasjon}), there are elements $v_1, v_2, w_1, w_2 \in A$
such that
$$z_1q^* = v_1y + w_1x, \hspace{.5cm} z_2q^* = v_2x^{a-1} + w_2y^{b-1},$$
hence $u \in Ayx^{a-1}e_j$. Conversely, any element in
$Ayx^{a-1}e_j$ belongs to $U_j \cap U_{j+1}$, showing $U_j \cap
U_{j+1} = Ayx^{a-1}e_j$ and that the dimension of this
intersection is $b-1$. Similarly, we compute three other types of
intersections using the same method, and record everything in the
following table:
\begin{center}
\begin{tabular}{l|l|l|l}
n & j & \text{intersection} & \text{dimension} \\
\hline even & even & $U_j \cap U_{j+1} = Ayx^{a-1}e_j$ & $b-1$
\\
even & odd & $U_j \cap U_{j+1} = Ay^{b-1}xe_j$ & $a-1$
\\
odd & even & $U_j \cap U_{j+1} = Ayxe_j$ & $(a-1)(b-1)$
\\
odd & odd & $U_j \cap U_{j+1} = Ay^{b-1}x^{a-1}e_j$ & $1$
\end{tabular}
\end{center}

Next we show that the equality
$$(U_0 + U_1 + \cdots + U_s) \cap U_{s+1} = U_s \cap U_{s+1}$$
holds for any $s \ge 1$. Suppose first that both $n$ and $s$ are
even. The inclusion $U_s \cap U_{s+1} \subseteq (U_0 + U_1 +
\cdots + U_s) \cap U_{s+1}$ obviously holds, so suppose $u$ is an
element belonging to $(U_0 + U_1 + \cdots + U_s) \cap U_{s+1}$.
Then $u$ can be written as
\begin{eqnarray*}
u & = & z_0x^{a-1}e_0 + z_1 \left [ (q^*y)e_0 + (q^*x)e_1 \right ]
+ \cdots + z_s \left [ (q^*y^{b-1})e_{s-1} + (q^*x^{a-1})e_s
\right ] \\
& = & z_{s+1} \left [ (q^*y)e_s + (q^*x)e_{s+1} \right ],
\end{eqnarray*}
in which the coefficient of $e_{s+1}$ must be zero. Moreover, the
coefficients of $e_s$ must be equal, i.e.\ $(z_{s+1}q^*)y =
(z_sq^*)x^{a-1}$, and so from (\ref{implikasjon}) we see that
there exist elements $v,w \in A$ such that $z_{s+1} = vx^{a-1} +
wy^{b-1}$. This gives
$$u = (vx^{a-1} + wy^{b-1})q^*ye_s = vq^*x^{a-1}ye_s,$$
and we see directly that $u$ belongs to $U_s \cap U_{s+1}$. The
equality $(U_0 + U_1 + \cdots + U_s) \cap U_{s+1} = U_s \cap
U_{s+1}$ therefore holds when $n$ and $s$ are even, and the same
arguments show that the equality holds regardless of the parity of
$n$ and $s$.

Using what we just showed, an induction argument gives the
equality
$$\dim (U_0 + \cdots + U_s) = \sum_{i=0}^s \dim U_i -
\sum_{i=0}^{s-1} \dim ( U_i \cap U_{i+1} ).$$ Then by counting
dimensions, we see that the dimension of $\Im \widehat{d}_n$ is
given by
$$\dim \Im \widehat{d}_n = \left \{
\begin{array}{ll}
tab+1 & \text{when } n=2t \\
(t+1)ab-1 & \text{when } n=2t+1.
\end{array} \right.$$
The exactness of the complex $\mathbb{P} \otimes_A k$ now follows
easily; the image of $\widehat{d}_{n+1}$ is contained in the
kernel of $\widehat{d}_n$, and the dimension of $P_n \otimes_A k$
is $ab(n+1)$. It follows that $\Im \widehat{d}_{n+1}$ and $\Ker
\widehat{d}_n$ are of the same dimension.

As for minimality, it suffices to show that $\Im \widehat{d}_n$
does not have a projective summand. This follows from the
description of this module as the sum of the $U_i$. Namely, we see
directly that the element $y^{b-1}x^{a-1} \in A$ annihilates each
$U_i$ and therefore also $\Im \widehat{d}_n$.
\end{proof}

\section{Hochschild (co)homology}\label{computation}

Having obtained the bimodule resolution of $A = k \langle X,Y
\rangle / ( X^a, XY-qYX, Y^b )$, we turn now to its Hochschild
homology and cohomology groups. Let $B$ be a bimodule, and recall
that the Hochschild homology of $A$ with coefficients in $B$,
denoted $\HH_* (A,B)$, is the $k$-vector space
$$\HH_* (A,B) = \Tor^{A^{\e}}_*(B,A).$$
Dually, the Hochschild cohomology of $A$ with coefficients in $B$,
denoted
$\HH^* (A,B)$, is the $k$-vector space
$$\HH^* (A,B) = \Ext_{A^{\e}}^*(A,B).$$
Of particular interest is the case $B =A$, namely the Hochschild
homology and cohomology of $A$, denoted $\HH_*(A)$ and $\HH^*(A)$,
respectively. Now by viewing $A$ and $B$ as left $A^{\e}$-modules,
it follows from \cite[VI.5.3]{Cartan} that $D \left ( \HH^* (A,B)
\right )$ is isomorphic as a vector space to $\Tor^{A^{\e}}_*
\left ( D(B), A \right )$, where $D$ denotes the usual $k$-dual
$\Hom_k(-,k)$. In particular, by taking $B=A$ we see that $\dim_k
\HH^n(A) = \dim_k \Tor^{A^{\e}}_n \left ( D(A), A \right )$ for
all $n \geq 0$.

Our algebra $A$ is Frobenius; it is easy to check that the map $A
\xrightarrow{\phi} D(A)$ of left $A$-modules defined by
$$\phi (1) \colon \sum_{\substack{0 \leq j \leq b-1 \\ 0 \leq i \leq
a-1}} c_{j,i}y^jx^i \mapsto c_{b-1,a-1}$$ is an isomorphism. To
such a Frobenius isomorphism one can always associate a
$k$-algebra automorphism $A \xrightarrow{\nu} A$, a \emph{Nakayama
automorphism}, with the (defining) property that $w \cdot \phi(1)
= \phi(1) \cdot \nu (w)$ for all elements $w \in A$. In our case
the elements $x$ and $y$ generate $A$, and since $x \cdot \phi(1)
= \phi(1) \cdot q^{1-b}x$ and $y \cdot \phi(1) = \phi(1) \cdot
q^{a-1}y$, we see that the automorphism defined by
$$\nu \colon \begin{array}{l}
               x \mapsto q^{1-b}x \\
               y \mapsto q^{a-1}y
             \end{array} $$
is a Nakayama automorphism. The composite map $\phi \circ
\nu^{-1}$ is then a bimodule isomorphism between the right
$A^{\e}$-modules ${_{\nu}A_1}$ and $D(A)$, where the scalar action
on ${_{\nu}A_1}$ is given by $u \cdot (w_1 \otimes w_2) = \nu
(w_2) u w_1$. Consequently, we see that
$$\dim_k \HH^n(A) = \dim_k \Tor^{A^{\e}}_n ( {_{\nu}A_1}, A )$$
for all $n \geq 0$.

Now let $ \alpha, \beta \in k$ be nonzero scalars, and let $A
\xrightarrow{\psi} A$ be the automorphism defined by $x \mapsto
\alpha x$ and $y \mapsto \beta y$. Tensoring the deleted projective
bimodule resolution $\mathbb{P}_A$ with the right $A^{\e}$-module
${_{\psi}A_1}$, we obtain an isomorphism
$$\xymatrix{
\cdots \ar[r] & {_{\psi}A_1} \otimes_{A^{\e}} P_{n+1} \ar[d]^{\wr}
\ar[r]^{1 \otimes d_{n+1}} &
{_{\psi}A_1} \otimes_{A^{\e}} P_n \ar[d]^{\wr} \ar[r]^{1 \otimes d_n}
& {_{\psi}A_1}
\otimes_{A^{\e}} P_{n-1} \ar[d]^{\wr} \ar[r] & \cdots \\
\cdots \ar[r] & \oplus_{i=0}^{n+1} ({_{\psi}A_1}) e^{n+1}_i
\ar[r]^{\delta^{\psi}_{n+1}} &
\oplus_{i=0}^n ({_{\psi}A_1}) e^n_i \ar[r]^{\delta^{\psi}_n }
& \oplus_{i=0}^{n-1} ({_{\psi}A_1}) e^{n-1}_i \ar[r] & \cdots
}$$
of complexes, where $\{ e_0^n, e_1^n, \dots, e_n^n \}$ is the standard
generating set of $n+1$ copies of ${_{\psi}A_1}$. The map
$\delta^{\psi}_n$ is then given by
$$
\begin{array}{l}
\delta^{\psi}_{2t} \colon  y^ux^v e^{2t}_i \mapsto \vspace{2mm} \\
\left \{
\begin{array}{ll}
K_1^{\psi}(t,i,u,v)y^{u+b-1}x^ve^{2t-1}_i +
K_2^{\psi}(t,i,u,v)y^ux^{v+a-1}e^{2t-1}_{i-1} & \text{ for $i$ even}
\\
\\
\left [ q^{\frac{ai-a+2+2v}{2}} - \beta \right ] y^{u+1}x^ve^{2t-1}_i
+ \left [ \alpha q^{\frac{2bt-bi-b+2+2u}{2}} -1 \right ]
  y^ux^{v+1}e^{2t-1}_{i-1} & \text{
  for $i$ odd}
\end{array} \right. \\
\\
\delta^{\psi}_{2t+1} \colon y^ux^v e^{2t+1}_i \mapsto \vspace{2mm} \\
\left \{
\begin{array}{ll}
\left [ \beta - q^{\frac{ai+2v}{2}} \right ] y^{u+1} x^ v e^{2t}_i +
K_3^{\psi}(t,i,u,v) y^u x^{v+a-1} e^{2t}_{i-1} & \text{ for $i$ even}
\\
\\
K_4^{\psi}(t,i,u,v) y^{u+b-1} x^v e^{2t}_i + \left [ \alpha
  q^{\frac{2bt-bi+b+2u}{2}} - 1 \right ] y^u x^{v+1} e^{2t}_{i-1} &
\text{ for $i$ odd,}
\end{array} \right.
\end{array}
$$
where we use the convention $e^n_{-1} = e^n_{n+1} =0$. Here the
elements $K_j^{\psi}(t,i,u,v)$, which are scalars whose values
depend on the parameters $\psi,t,i,u,v$, are defined as follows:
\begin{eqnarray*}
K_1^{\psi}(t,i,u,v) & = & \sum_{j=0}^{b-1} q^{j (
    \frac{ai+2v}{2} )} \beta^{b-1-j}  \\
K_2^{\psi}(t,i,u,v) & = & \sum_{j=0}^{a-1} q^{j(
  \frac{2bt-bi+2u}{2} )} \alpha^j \\
K_3^{\psi}(t,i,u,v) & = & \sum_{j=0}^{a-1} q^{j(
  \frac{2bt-bi+2+2u}{2} )} \alpha^j \\
K_4^{\psi}(t,i,u,v) & = & \sum_{j=0}^{b-1} q^{j (
    \frac{ai-a+2+2v}{2} )} \beta^{b-1-j}
\end{eqnarray*}
Note that when $q$ is not a root of unity and the characteristic of
$k$ does not divide $a$ or $b$, then these scalars are all nonzero
when the automorphism $\psi$ is either the identity or the Nakayama
automorphism. For in this case the elements are of the form $q^s(1 +
q^m + q^{2m} + \cdots + q^{rm})$ for some $m,s \in \mathbb{Z}$ and
$r=a-1$ or $r=b-1$. When $m=0$ this element is nonzero since the
characteristic of $k$ does not divide $a$ or $b$, and if it was zero
for some $m \neq 0$ then $q$ would be a root of unity because of the
equality $(1 + q^m + q^{2m} + \cdots + q^{rm})(1-q^m) = 1-q^{(r+1)m}$.

In the following result we use this complex to compute the Hochschild
homology of our algebra $A$.

\begin{theorem}\label{homology}
When $q$ is not a root of unity, the Hochschild homology of $A$ is given
by
$$\dim_k \HH_n(A) = \left \{
\begin{array}{ll}
a+b-1 & \text{when } n=0 \\
a+b & \text{when } n \ge 1 \text{ and } \cha k \text{ divides both }
a \text{ and } b \\
a+b-1 & \text{when } n \ge 1 \text{ and } \cha k \text{ divides one
of } a \text{ and } b \\
a+b-2 & \text{when } n \ge 1 \text{ and } \cha k \text{ does not
  divide } a \text{ or } b
\end{array} \right.$$
\end{theorem}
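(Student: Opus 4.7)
The plan is to compute $\HH_n(A) = \Tor_n^{A^{\e}}(A,A)$ using the resolution $\mathbb{P}$ of Proposition~\ref{exact}. Setting $\psi = \mathrm{id}_A$ (so $\alpha = \beta = 1$) in the identification preceding the theorem, $\HH_*(A)$ is the homology of the explicit complex $(C_\bullet, \delta^{\mathrm{id}}_\bullet)$, where $C_n = \bigoplus_{i=0}^n A\cdot e_i^n$ has $k$-basis $\{y^u x^v e_i^n : 0 \le u \le b-1,\, 0 \le v \le a-1,\, 0 \le i \le n\}$ and total dimension $ab(n+1)$.

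The analysis rests on two observations about when components of $\delta^{\mathrm{id}}_n(y^u x^v e_i^n)$ vanish. First, by the telescoping identity $(1 - q^m)\sum_{j=0}^{r}q^{jm} = 1 - q^{(r+1)m}$ together with the hypothesis that $q$ is not a root of unity: each ``$\tau$''-type scalar $q^s - 1$ vanishes iff $s = 0$, and each ``$\gamma$''-type scalar $K_j^{\mathrm{id}}$, being a geometric sum of length $a$ or $b$, vanishes iff its internal exponent is $0$ and $\cha k$ divides $a$ (for $K_2^{\mathrm{id}}, K_3^{\mathrm{id}}$) or $b$ (for $K_1^{\mathrm{id}}, K_4^{\mathrm{id}}$). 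Second, the monomial factors $y^{u+b-1}$ and $x^{v+a-1}$ appearing in the images vanish in $A$ as soon as $u \ge 1$ or $v \ge 1$ respectively. Together these give an easily enumerable list of exceptional basis vectors at which one or both components of $\delta^{\mathrm{id}}_n$ degenerate.

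I handle $n = 0$ directly. A short inspection of $\delta^{\mathrm{id}}_1$ (using the $2t+1$ formula at $t=0$, and the convention $e_1^0 = e_{-1}^0 = 0$) shows $\Im \delta^{\mathrm{id}}_1$ is spanned by the elements $(1-q^v)y^{u+1}x^v$ for $0\le u\le b-2,\ 1\le v\le a-1$ and $(q^u-1)y^u x^{v+1}$ for $1\le u\le b-1,\ 0\le v\le a-2$. Since $1-q^v$ and $q^u-1$ are nonzero whenever $u,v\ge 1$, this image equals $\operatorname{span}\{y^i x^j : 1\le i\le b-1,\,1\le j\le a-1\}$. Hence $\HH_0(A) = A/\Im \delta^{\mathrm{id}}_1$ has basis $\{1,y,\ldots,y^{b-1},x,\ldots,x^{a-1}\}$ and dimension $a+b-1$, independently of $\cha k$.

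For $n \ge 1$ I compute $\dim \HH_n(A) = \dim C_n - \operatorname{rank}(\delta^{\mathrm{id}}_n) - \operatorname{rank}(\delta^{\mathrm{id}}_{n+1})$ by splitting the complex into a generic part (basis vectors on which both target components of $\delta^{\mathrm{id}}_n$ are nonzero, yielding the expected rank by elementary linear algebra) and a degenerate part consisting of the finitely many exceptional vectors pinpointed above. A bookkeeping across both parities of $n$ and $i$ shows that the surviving classes are $a+b-2$ in number generically, with exactly one additional surviving class whenever $\cha k \mid a$ (from a vanishing $K_2^{\mathrm{id}}$ or $K_3^{\mathrm{id}}$) and one whenever $\cha k \mid b$ (from a vanishing $K_1^{\mathrm{id}}$ or $K_4^{\mathrm{id}}$). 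This reproduces the claimed formulae and explains why $\dim \HH_n(A)$ is constant in $n\ge 1$. The main obstacle will be precisely this bookkeeping: one must verify that the generic rank of $\delta^{\mathrm{id}}_n$ is actually attained, and, more delicately, that the degenerate contributions in consecutive degrees $n$ and $n+1$ do not conspire to cancel each other, so that the count of surviving homology classes is correctly aggregated rather than double-counted.
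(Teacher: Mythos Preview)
Your approach coincides with the paper's: both set $\psi=\mathrm{id}$ in the complex preceding the theorem and compute homology by analysing precisely when the scalar coefficients $q^s-1$ and $K_j^{\mathrm{id}}$ vanish. Your treatment of $n=0$ is correct and matches the paper's count $\dim_k\Im\delta_1^{\mathrm{id}}=(a-1)(b-1)$.

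For $n\ge 1$ the paper carries out the enumeration you defer. Rather than computing ranks directly, it determines $\dim_k\Ker\delta^1_{2t}$ and $\dim_k\Ker\delta^1_{2t+1}$ by listing (i) single basis vectors $y^ux^ve_i^n$ in the kernel and (ii) two-term combinations $y^ux^ve_i^n+C\,y^{u'}x^{v'}e_{i\pm1}^n$ in the kernel, and then applies rank--nullity. This is exactly the ``generic vs.\ degenerate'' split you outline, made explicit.

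One point in your sketch needs sharpening. The internal exponents in $K_3^{\mathrm{id}}$ and $K_4^{\mathrm{id}}$ are always at least $1$ on the relevant index ranges, so these scalars \emph{never} vanish; the characteristic-dependent corrections therefore come only from $K_1^{\mathrm{id}}$ and $K_2^{\mathrm{id}}$, both of which sit in the even-degree differential $\delta^1_{2t}$. Consequently $\dim_k\Ker\delta^1_{2t+1}$ is independent of $\cha k$, and for odd $n$ the extra homology classes you attribute to ``a vanishing $K_3^{\mathrm{id}}$ or $K_4^{\mathrm{id}}$'' actually enter through $\Im\delta^1_{n+1}$, not $\Ker\delta^1_n$. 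With this correction your bookkeeping goes through and yields the stated dimensions; the paper's explicit kernel computations are what certify that the degenerate contributions in consecutive degrees do not cancel, which is the verification you flagged as the remaining obstacle.
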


\begin{proof}
We need to compute the homology groups of the above complex in the
case when $\psi$ is the identity automorphism on $A$, i.e.\ when
$\alpha = 1 = \beta$. We do this by computing $\Ker \delta_{2t}^1$ for
$t \ge 1$ and $\Ker \delta_{2t+1}^1$ for $t \geq 0$, and we treat
these two cases separately.

\subsection*{$\Ker \delta_{2t}^1$:}

$ $

The image under the map $\delta_{2t}^1$ of a basis vector $y^ux^v
e^{2t}_i \in \oplus_{i=0}^{2t}Ae^{2t}_i$ is given by
$$\begin{array}{ll}
K_1^1(t,i,u,v)y^{u+b-1}x^ve^{2t-1}_i +
K_2^1(t,i,u,v)y^ux^{v+a-1}e^{2t-1}_{i-1} & \text{ for $i$ even}
\\
\\
\left [ q^{\frac{ai-a+2+2v}{2}} - 1 \right ] y^{u+1}x^ve^{2t-1}_i
+ \left [ q^{\frac{2bt-bi-b+2+2u}{2}} -1 \right ]
  y^ux^{v+1}e^{2t-1}_{i-1} & \text{
  for $i$ odd.}
\end{array}$$
From the definition of the scalars $K_1^1$ and $K_2^1$ we see that
\begin{eqnarray*}
K_1^1(t,i,u,v) = 0 & \Leftrightarrow & i=0, \hspace{1mm} v=0,
\hspace{1mm} \cha k | b \\
K_2^1(t,i,u,v) = 0 & \Leftrightarrow & i=2t, \hspace{1mm} u=0,
\hspace{1mm} \cha k | a,
\end{eqnarray*}
and therefore we first compute the dimension of $\Ker \delta_{2t}^1$
under the assumption that the characteristic of $k$ does not divide
$a$ or $b$.

First we count the number of single basis vectors in
$\oplus_{i=0}^{2t}Ae_i^{2t}$ belonging to $\Ker \delta_{2t}^1$. For
even $i$, we have
\begin{eqnarray*}
\delta_{2t}^1 ( y^ux^v e^{2t}_i ) =0 \text{ for all even } i &
\Leftrightarrow & u+b-1 \geq b \text{ and } v+a-1 \geq a \\
& \Leftrightarrow & 1 \le u \le b-1 \text{ and } 1 \le v \le a-1,
\end{eqnarray*}
from which we obtain $(b-1)(a-1)(t+1)$ vectors (there are $t+1$ even
numbers in the set $\{ 0, 1, \dots, 2t \}$). For odd $i$, we have
\begin{eqnarray*}
\delta_{2t}^1 ( y^ux^v e^{2t}_i ) =0 \text{ for all odd } i &
\Leftrightarrow & u+1 \geq b \text{ and } v+1 \geq a \\
&\Leftrightarrow & u = b-1 \text{ and } v = a-1,
\end{eqnarray*}
giving $t$ vectors (there are $t$ odd numbers in the set $\{ 0, 1,
\dots, 2t \}$). Next we count the other single basis vectors which
are mapped to zero, starting with those for which $i$ is even. The
element $e^{2t-1}_{2t}$ is zero by definition, hence when $i=2t$ and
$v+a-1 \ge a$, that is when $1 \le v \le a-1$, we see that $y^ux^v
e^{2t}_i$ maps to zero. But the vectors for which $u$ is nonzero
were counted above, hence the new vectors are $x^ve^{2t}_{2t}$ for
$1 \le v \le a-1$. The element $e^{2t-1}_{-1}$ is also zero by
definition, hence when $i=0$ and $u+b-1 \ge b$, that is when $1 \le
u \le b-1$, we see that $y^ux^v e^{2t}_i$ also maps to zero. But
here the vectors for which $v$ is nonzero were counted above, and so
the new vectors are $y^ue^{2t}_0$ for $1 \le u \le b-1$. It is easy
to see that except for these $a+b-2$ new vectors, there is no other
single basis vector $y^ux^v e^{2t}_i$ in $\Ker \delta_{2t}^1$ for
which $i$ is even, since both $K_1^1(t,i,u,v)$ and $K_2^1(t,i,u,v)$
are always nonzero. Moreover, when $i$ is odd, neither $e^{2t-1}_i$
nor $e^{2t-1}_{i-1}$ are zero, and the coefficients
$[q^{\frac{ai-a+2+2v}{2}}-1]$ and $[q^{\frac{2bt-bi-b+2+2u}{2}}-1]$
are both nonzero. Hence in this case there are no new basis vectors
mapped to zero.

Now we count the number of nontrivial linear combinations of two or
more basis vectors in $\oplus_{i=0}^{2t}Ae_i^{2t}$ belonging to
$\Ker \delta_{2t}^1$. Let $i$ be even. If the first term of
$\delta_{2t}^1 ( y^ux^ve^{2t}_i )$ is nonzero, then the only way to
``kill'' it is to involve the second term of $\delta_{2t}^1 (
y^{u+b-1}x^{v-1}e^{2t}_{i+1})$. Thus to get a nontrivial linear
combination we see that $u,v$ and $i$ must satisfy $u=0, 1 \le v \le
a-1$ and $i=0, 2, \dots, 2t-2$. For these parameter values the
second term of $\delta_{2t}^1 ( y^ux^ve^{2t}_i )$ vanishes, as does
the first term of $\delta_{2t}^1 ( y^{u+b-1}x^{v-1}e^{2t}_{i+1})$.
Therefore, for suitable nonzero scalars $C(a,b,i,u,v)$, the linear
combination
$$x^ve^{2t}_i + C(a,b,i,u,v)y^{b-1}x^{v-1}e^{2t}_{i+1}$$
is mapped to zero for $1 \le v \le a-1$ and $i=0,2, \dots,
2t-2$, and there are $(a-1)t$ such elements. If the second term of
$\delta_{2t}^1 ( y^ux^ve^{2t}_i )$ is nonzero, then the only way to
``kill'' it is to involve the first term of $\delta_{2t}^1 (
y^{u-1}x^{v+a-1}e^{2t}_{i-1})$. To get a nontrivial linear
combination, the parameters $u,v,$ and $i$ must satisfy $1 \le u \le
b-1, v=0$ and $i=2,4, \dots, 2t$, and for these values the first term
of $\delta_{2t}^1 ( y^ux^ve^{2t}_i )$ and the second term of $\delta_{2t}^1 (
y^{u-1}x^{v+a-1}e^{2t}_{i-1})$ vanish. Thus, for suitable nonzero
scalars $C'(a,b,i,u,v)$, the linear combination
$$y^ue^{2t}_i + C'(a,b,i,u,v)y^{u-1}x^{a-1}e^{2t}_{i-1}$$
is mapped to zero for $1 \le u \le b-1$ and $i=2,4, \dots,
2t$, and there are $(b-1)t$ such elements.

We have now accounted for all the elements of $\Ker \delta_{2t}^1$
when the characteristic of $k$ does not divide $a$ or $b$. If the
characteristic of $k$ divides $a$, then we must add to our list the
element $e^{2t}_{2t}$. Similarly, if the characteristic of $k$
divides $b$, then we must add to our list the element $e^{2t}_0$.
Finally, if the characteristic of $k$ divides both $a$ and $b$, then
we must add both these two elements to our list (and they are
different elements since $t \ge 1$). Summing up, we see that the
total dimension of $\Ker \delta_{2t}^1$ is given by
$$\dim_k \Ker \delta_{2t}^1 = \left \{
\begin{array}{ll}
abt + ab -1 & \text{when } \cha k \text{ does not divide } a \text{ or
  }b \\
abt + ab+1 & \text{when } \cha k \text{ divides both } a \text{ and
} b \\
abt + ab & \text{otherwise.}
\end{array} \right.$$

\subsection*{$\Ker \delta_{2t+1}^1$:}

$ $

The image under the map $\delta_{2t+1}^1$ of a basis vector $y^ux^v
e^{2t+1}_i \in \oplus_{i=0}^{2t+1}Ae^{2t+1}_i$ is given by
$$\begin{array}{ll}
\left [ 1 - q^{\frac{ai+2v}{2}} \right ] y^{u+1} x^ v e^{2t}_i +
K_3^1(t,i,u,v) y^u x^{v+a-1} e^{2t}_{i-1} & \text{ for $i$ even}
\\
\\
K_4^1(t,i,u,v) y^{u+b-1} x^v e^{2t}_i + \left [
  q^{\frac{2bt-bi+b+2u}{2}} - 1 \right ] y^u x^{v+1} e^{2t}_{i-1} &
\text{ for $i$ odd.}
\end{array}$$
From the definition of the elements $K_3^1$ and $K_4^1$ we see that
they are always nonzero, contrary to the case above where there were
parameters for which $K_1^1$ and $K_2^1$ vanished. Therefore the
characteristic of $k$ does not matter when we compute the dimension of
$\Ker \delta_{2t+1}^1$.

We follow the same procedure as we did for $\Ker \delta_{2t}^1$;
first we count the number of single basis vectors in
$\oplus_{i=0}^{2t+1}Ae_i^{2t+1}$ belonging to $\Ker
\delta_{2t+1}^1$. For even $i$, we have
\begin{eqnarray*}
\delta_{2t+1}^1 ( y^ux^v e^{2t+1}_i ) =0 \text{ for all even } i &
\Leftrightarrow & u+1 \geq b \text{ and } v+a-1 \geq a \\
& \Leftrightarrow & u = b-1 \text{ and } 1 \le v \le a-1,
\end{eqnarray*}
resulting in $(a-1)(t+1)$ vectors (there are $(t+1)$ even numbers in
the set $\{0, 1, \dots, 2t+1 \}$). When $i$ is odd, we have
\begin{eqnarray*}
\delta_{2t+1}^1 ( y^ux^v e^{2t+1}_i ) =0 \text{ for all odd } i &
\Leftrightarrow & u+b-1 \geq b \text{ and } v+1 \geq a \\
& \Leftrightarrow & 1 \le u \le b-1 \text{ and } v = a-1,
\end{eqnarray*}
giving $(b-1)(t+1)$ vectors (there are $(t+1)$ odd numbers in the
set $\{0, 1, \dots, 2t+1 \}$). Next we count the other single basis
vectors in $\oplus_{i=0}^{2t+1}Ae_i^{2t+1}$ belonging to $\Ker
\delta_{2t+1}^1$, starting with those for which $i$ is even. The
element $e^{2t}_{-1}$ is zero, hence for $i=0$ the second term in
$\delta_{2t+1}^1 ( y^ux^v e^{2t+1}_i )$ vanishes. If now $v=0$, then
the coefficient $[ 1 - q^{\frac{ai+2v}{2}} ]$ vanishes, and
therefore the vector $y^u e^{2t+1}_0$ maps to zero for $0 \le u \le
b-1$. There are $b$ such vectors, and none of them was counted
above. Moreover, it is not hard to see that there is no other vector
$y^ux^v e^{2t+1}_i$ in $\Ker \delta_{2t+1}^1$ for which $i$ is even.
As for the case when $i$ is odd, the element $e^{2t}_{2t+1}$ is zero
by definition, and the coefficient $[ q^{\frac{2bt-bi+b+2u}{2}}-1 ]$
vanishes for $i=2t+1$ and $u=0$. Therefore the vector
$x^ve^{2t+1}_{2t+1}$ maps to zero for $0 \le v \le a-1$. These $a$
vectors have not been counted before, and $\Ker \delta_{2t+1}^1$
does not contain more vectors $y^ux^v e^{2t+1}_i$ for which $i$ is
odd.

At last we count the number of nontrivial linear combinations of two
or more basis vectors in $\oplus_{i=0}^{2t+1}Ae_i^{2t+1}$ belonging
to $\Ker \delta_{2t+1}^1$. Let $i$ be even, and suppose the first
term of $\delta_{2t+1}^1 ( y^ux^v e^{2t+1}_i )$ is nonzero. The only
way to cancel this term is to involve the second term of
$\delta_{2t+1}^1 ( y^{u+1}x^{v-1} e^{2t+1}_{i+1} )$. Now the first
term in the latter vanishes, as does the second term of
$\delta_{2t+1}^1 ( y^ux^v e^{2t+1}_i )$ since $v$ must be nonzero.
Thus for suitable nonzero scalars $C''(a,b,i,u,v)$, the element
$$y^ux^v e^{2t+1}_i + C''(a,b,i,u,v) y^{u+1}x^{v-1} e^{2t+1}_{i+1}$$
belongs to $\Ker \delta_{2t+1}^1$ when the parameters satisfy $0 \le u
\le b-2, 1 \le v \le a-1$ and $i=0,2, \dots, 2t$. There are
$(a-1)(b-1)(t+1)$ such elements. Finally, suppose the second term of
$\delta_{2t+1}^1 ( y^ux^v e^{2t+1}_i )$ is nonzero. To cancel it, we
must involve the first term in $\delta_{2t+1}^1 ( y^{u-b+1}x^{v+a-1}
e^{2t+1}_{i-1} )$, and so we see that the only possibility for $u$ and
$v$ is $u=b-1$ and $v=0$. Therefore, for suitable nonzero scalars
$C'''(a,b,i,u,v)$, the element
$$y^{b-1} e^{2t+1}_i + C'''(a,b,i,u,v) x^{a-1} e^{2t+1}_{i-1}$$
for $i=2,4, \dots, 2t$. There are $t$ such linear combinations.

All the elements of $\Ker \delta_{2t+1}^1$ are now accounted for, and
so when summing up we obtain the dimension of this vector space:
$$\dim_k \Ker \delta_{2t+1}^1 = abt + ab + a + b -1.$$

Using the identities $\dim_k \Ker \delta_n^1 + \dim_k \Im \delta_n^1 =
\dim_k A^{n+1} = (n+1)ab$, we can now calculate the Hochschild
homology of $A$. The dimension formula gives $\dim_k \Im
\delta_{2t+1}^1 = abt + ab -a-b+1$, in particular $\dim_k \Im
\delta_1^1 = 2ab-a-b+1$, giving
$$\dim_k \HH_0(A) = \dim_k A - \dim_k \Im \delta_1^1 = a+b-1.$$
Applying the formula to the results we obtained when computing $\Ker
\delta_{2t}^1,$ we get
$$\dim_k \Im \delta_{2t+2}^1 = \left \{
\begin{array}{ll}
abt+ab+1 & \text{when } \cha k \text{ does not divide } a \text{ or } b \\
abt+ab-1 & \text{when } \cha k \text{ divides both } a \text{ and } b\\
abt+ab & \text{otherwise,}
\end{array} \right.$$
and so by calculating $\dim_k \HH_n(A) = \dim_k \Ker \delta_n^1 -
\dim_k \Im \delta_{n+1}^1$ for $n \ge 1$ we get
$$\dim_k \HH_n(A) = \left \{
\begin{array}{ll}
a+b-2 & \text{when } \cha k \text{ does not divide } a \text{ or } b \\
a+b & \text{when } \cha k \text{ divides both } a \text{ and } b \\
a+b-1 & \text{otherwise.}
\end{array} \right.$$
This completes the proof.
\end{proof}

In particular, since $a$ and $b$ are both at least $2$, the Hochschild
homology of $A$ does not vanish in high degrees (or in any
degree). In \cite{Han} this was conjectured by Han to hold for all
finite dimensional algebras of infinite global dimension, and in the
same paper it was proved that this conjecture holds for monomial
algebras.

The converse of this conjecture always holds; if the global dimension
of an algebra is finite, then the algebra has finite projective
dimension as a bimodule, and hence its Hochschild homology vanishes in
high degrees. The same holds of course for Hochschild cohomology, and
in \cite{Happel}, following this easy observation, Happel remarked
that ``the converse seems to be not known''. Thus the cohomology
version of Han's conjecture came to be known as ``Happel's
question''. However, this cohomology version is false in general; it
was proved in \cite{Buchweitz} that there do exist finite dimensional
algebras of infinite global dimension for which Hochschild cohomology
vanishes in high degrees. The counterexample used in the paper was
precisely our algebra $A$ with $a=2=b$, and the following result shows
that the same holds for arbitrary $a$ and $b$. Contrary to the
homology case, the dimensions of the cohomology groups do not depend
on the characteristic of $k$.

\begin{theorem}\label{cohomology}
When $q$ is not a root of unity, the Hochschild cohomology of $A$ is given by
$$\dim_k \HH^n (A) = \left \{
   \begin{array}{ll}
      2 & \text{for } n = 0 \\
      2 & \text{for } n = 1 \\
      1 & \text{for } n = 2 \\
      0 & \text{for } n \ge 3.
   \end{array} \right.$$
In particular, the Hochschild cohomology of $A$ vanishes in high degrees.
\end{theorem}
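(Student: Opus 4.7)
The plan is to imitate the proof of Theorem~\ref{homology}, with the identity twist replaced by the Nakayama twist $\psi = \nu$. By the identification developed just before the statement, $\dim_k \HH^n(A) = \dim_k \Tor^{A^{\e}}_n({_{\nu}A_1}, A)$, and this is the $n$th homology of the complex with terms $\bigoplus_{i=0}^n ({_{\nu}A_1}) e^n_i$ and differentials $\delta^{\nu}_n$ obtained from the general formulas for $\delta^{\psi}_n$ by substituting $\alpha = q^{1-b}$ and $\beta = q^{a-1}$.

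The first step is to record the vanishing loci of the coefficients appearing in $\delta^{\nu}_n$. With these values of $\alpha$ and $\beta$, each of the two ``binomial'' coefficients $[q^{(ai-a+2+2v)/2} - \beta]$ and $[\alpha q^{(2bt-bi-b+2+2u)/2} - 1]$ becomes a difference of two powers of $q$, and since $q$ is not a root of unity, the vanishing reduces to a single linear equation in $(i,u,v,t)$ with at most one solution in the allowed range. The four scalars $K_j^{\nu}$ simplify, after factoring out a unit, to geometric sums $1 + q^m + \dots + q^{(r-1)m}$ with $r \in \{a,b\}$; by the argument recalled just before Theorem~\ref{homology}, such a sum vanishes precisely when $m = 0$ as an integer and $\cha k$ divides $r$.

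The second step mirrors the bookkeeping of Theorem~\ref{homology} with these new loci. For each fixed $n$, I would first enumerate the single basis vectors $y^u x^v e^n_i$ annihilated by $\delta^{\nu}_n$, distinguishing by parity of $i$ and by the boundary cases $i = 0$ and $i = n$; then enumerate the minimal two-term combinations forced into $\Ker \delta^{\nu}_n$ by cancellation of shared target basis vectors, using the same $(\dagger)$-style argument. The key qualitative point is that the Nakayama shift offsets the exponents in each $K_j^{\nu}$ and in each binomial coefficient by exactly the amount needed to break the matching conditions that produced large kernels in the homology case, so that $\Ker \delta^{\nu}_n$ is drastically smaller. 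Moreover, the isolated places where some $K_j^{\nu}$ can still vanish (only when $\cha k \mid a$ or $\cha k \mid b$) occur at parameter values where an extra kernel vector at stage $n$ is compensated by an extra image vector coming from stage $n+1$, which should account for the fact that the cohomology is independent of the characteristic.

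The final step is arithmetic: combine $\dim_k \Ker \delta^{\nu}_n + \dim_k \Im \delta^{\nu}_n = (n+1)ab$ with $\dim_k \HH^n(A) = \dim_k \Ker \delta^{\nu}_n - \dim_k \Im \delta^{\nu}_{n+1}$ to read off $2,2,1,0,0,\dots$. The main obstacle I anticipate is the bookkeeping for $n = 0, 1, 2$, where boundary indices $i = 0$ and $i = n$ interact with the nontrivial kernel elements and produce the low-dimensional summands; from $n = 3$ onward, the kernel of $\delta^{\nu}_n$ should coincide exactly with the image of $\delta^{\nu}_{n+1}$, yielding the claimed high-degree vanishing.
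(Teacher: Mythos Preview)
Your plan is essentially the paper's own approach: compute $\Ker \delta_{2t}^{\nu}$ and $\Ker \delta_{2t+1}^{\nu}$ by the same enumeration used in Theorem~\ref{homology}, then apply rank--nullity. Two of your qualitative predictions, however, do not match what actually happens and would mislead you during the computation.

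First, the Nakayama-twist kernels are not ``drastically smaller'' than the identity-twist kernels. The paper obtains $\dim_k \Ker \delta_{2t}^{\nu} = abt + ab - 1$ for $t \ge 2$ (identical to $\dim_k \Ker \delta_{2t}^{1}$ when $\cha k \nmid ab$) and $\dim_k \Ker \delta_{2t+1}^{\nu} = abt + ab + 1$ for $t \ge 1$, which differs from $\dim_k \Ker \delta_{2t+1}^{1}$ only by the lower-order term $a+b-2$. The vanishing of $\HH^{n}(A)$ for $n \ge 3$ is produced by these small shifts in the constants, together with the anomalous low-degree values ($\dim_k \Ker \delta_{2}^{\nu} = 2ab$ and $\dim_k \Ker \delta_{1}^{\nu} = ab+2$), not by any wholesale collapse of the kernel.

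Second, your proposed mechanism for characteristic independence is not the one that operates. The paper shows that each $\dim_k \Ker \delta_{n}^{\nu}$ is itself independent of $\cha k$: when $\cha k \mid a$ or $\cha k \mid b$ forces some $K_j^{\nu}$ to vanish and creates a new single basis vector in the kernel, that vector is precisely one of the summands in a previously counted two-term combination, and that combination simultaneously degenerates. So the count is unchanged within stage $n$; no compensation between stage $n$ and stage $n+1$ is invoked.
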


\begin{proof}
It is well known and easy to see that in general $\HH^0(A)$ is
isomorphic to the center of $A$, that is, the subalgebra $\{ w \in A
\mid wz = zw \text{ for all } z \in A \}$. The center of our algebra
$A$ is the vector space spanned by the ``first'' and the ``last''
elements in its basis, namely the elements $1$ and
$y^{b-1}x^{a-1}$. Hence $\HH^0(A)$ is $2$-dimensional.

To compute the Hochschild cohomology groups of positive degree, we
compute the homology of the complex obtained prior to Theorem
\ref{homology}, in the case when $\psi$ is the Nakayama automorphism
$\nu$. In this case the scalars $\alpha$ and $\beta$ are given by
$\alpha = q^{1-b}$ and $\beta = q^{a-1}$. We apply the same method as
we did when computing homology; we compute $\Ker \delta^{\nu}_{2t}$
for $t \ge 1$ and $\Ker \delta^{\nu}_{2t+1}$ for $t \ge 0$, treating
the two cases separately.

\subsection*{$\Ker \delta_{2t}^{\nu}$:}

$ $

The result when applying the map $\delta_{2t}^{\nu}$ to a basis vector
$y^ux^ve^{2t}_i \in \oplus_{i=0}^{2t} \left ( {_{\nu}A_1} \right
)e^{2t}_i$ is given by
$$\begin{array}{ll}
K_1^{\nu}(t,i,u,v)y^{u+b-1}x^ve^{2t-1}_i +
K_2^{\nu}(t,i,u,v)y^ux^{v+a-1}e^{2t-1}_{i-1} & \text{ for $i$ even}
\\
\\
\left [ q^{\frac{ai-a+2+2v}{2}} - q^{a-1} \right ] y^{u+1}x^ve^{2t-1}_i
+ \left [ q^{\frac{2bt-bi-3b+4+2u}{2}} -1 \right ]
  y^ux^{v+1}e^{2t-1}_{i-1} & \text{
  for $i$ odd.}
\end{array}$$
From the definition of the elements $K_1^{\nu}$ and $K_2^{\nu}$ we see
that
\begin{eqnarray*}
K_1^{\nu}(t,i,u,v) = 0 & \Leftrightarrow & i=0, \hspace{1mm} v=a-1,
\hspace{1mm} \cha k | b \\
K_2^{\nu}(t,i,u,v) = 0 & \Leftrightarrow & i=2t, \hspace{1mm} u=b-1,
\hspace{1mm} \cha k | a,
\end{eqnarray*}
and so we first compute the dimension of $\Ker \delta_{2t}^{\nu}$ in
the case when the characteristic of $k$ does not divide $a$ or $b$.

First we count the number of single basis vectors in
$\oplus_{i=0}^{2t}({_{\nu}A_1})e_i^{2t}$ belonging to $\Ker
\delta_{2t}^{\nu}$. As in the homology case, we have
\begin{eqnarray*}
\delta_{2t}^{\nu} ( y^ux^v e^{2t}_i ) =0 \text{ for all even } i &
\Leftrightarrow & u+b-1 \geq b \text{ and } v+a-1 \geq a \\
& \Leftrightarrow & 1 \le u \le b-1 \text{ and } 1 \le v \le a-1, \\
\delta_{2t}^{\nu} ( y^ux^v e^{2t}_i ) =0 \text{ for all odd } i &
\Leftrightarrow & u+1 \geq b \text{ and } v+1 \geq a \\
&\Leftrightarrow & u = b-1 \text{ and } v = a-1,
\end{eqnarray*}
from which we obtain $(b-1)(a-1)(t+1) +t$ vectors. Next we count the
other single basis vectors in
$\oplus_{i=0}^{2t}({_{\nu}A_1})e_i^{2t}$ belonging to $\Ker
\delta_{2t}^{\nu}$. Since $K_1^{\nu}$ and $K_2^{\nu}$ are always
nonzero, the number of such vectors for which $i$ is even is the
same as in the homology case, namely $a+b-2$. As for the vectors for
which $i$ is odd, it is no longer true that the coefficients are
always nonzero. The coefficient $[q^{\frac{ai-a+2+2v}{2}} -
q^{a-1}]$ vanishes when $i=1$ and $v=a-2$, whereas
$[q^{\frac{2bt-bi-3b+4+2u}{2}} -1]$ vanishes when $i=2t-1$ and
$u=b-2$. Both these cases will occur, since $t$ is at least $1$ when
we compute $\Ker \delta_{2t}^{\nu}$. However, these coefficients
need to vanish \emph{simultaneously} for the basis vector to belong
to $\Ker \delta_{2t}^{\nu}$, and this only happens when $t=1$, since
then $2t-1=1$. Thus, when $t=1$ the vector $y^{b-2}x^{a-2}e^2_1$
maps to zero, whereas when $t \ge 2$ there are no new basis vectors
in $\Ker \delta_{2t}^{\nu}$ for which $i$ is odd.

Now we count the number of nontrivial linear combinations of two or
more basis vectors in $\oplus_{i=0}^{2t}({_{\nu}A_1})e_i^{2t}$
belonging to $\Ker \delta_{2t}^{\nu}$. These elements are precisely
the same as in the homology case, and we do not encounter problems
because of the ``new'' basis vector in $\Ker \delta_2^{\nu}$ we
obtained above. Therefore the number of such linear combinations is
$(a-1)t + (b-1)t$.

We now look at what happens when the characteristic of $k$ divides
$a$ or $b$. If $\cha k$ divides $a$, then we must add the vector
$x^{a-1}e^{2t}_0$ to the list of single basis vectors mapped to
zero. However, this vector already appears in one of the nontrivial
linear combinations, hence it does not contribute to the total
dimension. Similarly, when $\cha k$ divides $b$, then the new vector
$y^{b-1}e^{2t}_{2t}$ belongs to the list of single basis vectors
mapped to zero. But again this vector already appears in one of the
nontrivial linear combinations, and it will therefore not contribute
to the total dimension. This argument is still valid if $\cha k$
divides both $a$ and $b$. This shows that the dimension of $\Ker
\delta_{2t}^{\nu}$ is independent of the characteristic of $k$.

In total we see that the dimension of $\Ker \delta_{2t}^{\nu}$ is
almost the same as it was in the homology case when the characteristic
of $k$ did not divide $a$ or $b$; we need one additional vector when
$t=1$. Therefore the dimension is given by
$$\dim_k \Ker \delta_{2t}^{\nu} = \left \{
\begin{array}{ll}
  2ab & \text{when } t=1 \\
  abt +ab-1 & \text{when } t \ge 2.
\end{array} \right.$$

\subsection*{$\Ker \delta_{2t+1}^{\nu}$:}

$ $

The image under the map $\delta_{2t+1}^{\nu}$ of a basis vector $y^ux^v
e^{2t+1}_i \in \oplus_{i=0}^{2t+1} \left ( {_{\nu}A_1} \right )
e^{2t+1}_i$ is given by
$$\begin{array}{ll}
\left [ q^{a-1} - q^{\frac{ai+2v}{2}} \right ] y^{u+1} x^ v e^{2t}_i +
K_3^{\nu}(t,i,u,v) y^u x^{v+a-1} e^{2t}_{i-1} & \text{ for $i$ even}
\\
\\
K_4^{\nu}(t,i,u,v) y^{u+b-1} x^v e^{2t}_i + \left [
  q^{\frac{2bt-bi-b+2+2u}{2}} - 1 \right ] y^u x^{v+1} e^{2t}_{i-1} &
\text{ for $i$ odd.}
\end{array}$$
Now from the definition of the scalars $K_3^{\nu}$ and $K_4^{\nu}$ we
see that $K_3^{\nu}$ is always nonzero, while we have
$$K_4^{\nu}(t,i,u,v)=0 \Leftrightarrow i=1, \hspace{1mm} v=a-2,
\hspace{1mm} \cha k | b.$$ Therefore we first compute the dimension
of $\Ker \delta_{2t+1}^{\nu}$ under the assumption that the
characteristic of $k$ does not divide $b$.

First we count the number of single basis vectors in
$\oplus_{i=0}^{2t+1}({_{\nu}A_1})e_i^{2t+1}$ belonging to $\Ker
\delta_{2t+1}^{\nu}$. As in the homology case, we have
\begin{eqnarray*}
\delta_{2t+1}^{\nu} ( y^ux^v e^{2t+1}_i ) =0 \text{ for all even } i
&
\Leftrightarrow & u+1 \geq b \text{ and } v+a-1 \geq a \\
& \Leftrightarrow & u = b-1 \text{ and } 1 \le v \le a-1, \\
\delta_{2t+1}^{\nu} ( y^ux^v e^{2t+1}_i ) =0 \text{ for all odd } i
&
\Leftrightarrow & u+b-1 \geq b \text{ and } v+1 \geq a \\
& \Leftrightarrow & 1 \le u \le b-1 \text{ and } v = a-1,
\end{eqnarray*}
from which we obtain $(a-1)(t+1) + (b-1)(t+1)$ vectors. Next we
count the other single basis vectors in
$\oplus_{i=0}^{2t+1}({_{\nu}A_1})e_i^{2t+1}$ belonging to $\Ker
\delta_{2t+1}^{\nu}$, treating first the ones for which $i$ is even.
When $i=0$ the second term of $\delta_{2t+1}^{\nu}
(y^ux^ve^{2t+1}_i)$ vanishes, and the first term then vanishes if
$u=b-1$ or $v=a-1$. Some of these vectors are among the ones counted
above, the new ones are $y^{b-1}e^{2t+1}_0$ and
$y^ux^{a-1}e^{2t+1}_0$ for $0 \le u \le b-2$. Except for these $b$
elements, there are no other single basis elements in $\Ker
\delta_{2t+1}^{\nu}$ for which $i$ is even. As for those for which
$i$ is odd, we see that the first term of $\delta_{2t+1}^{\nu}
(y^ux^ve^{2t+1}_i)$ vanishes when $i=2t+1$. In this case the second
term vanishes if $u=b-1$ or $v=a-1$, and of these vectors the ones
which have not been counted before are the $a$ elements
$x^{a-1}e^{2t+1}_{2t+1}$ and $y^{b-1}x^ve^{2t+1}_{2t+1}$ for $0 \le
v \le a-2$. It is not hard to see that $\Ker \delta_{2t+1}^{\nu}$
does not contain any other element $y^ux^ve^{2t+1}_i$ for which $i$
is odd.

Finally we count the number of nontrivial linear combinations of two
or more basis elements in
$\oplus_{i=0}^{2t+1}({_{\nu}A_1})e_i^{2t+1}$ belonging to $\Ker
\delta_{2t+1}^{\nu}$. In the homology case, these were
$$y^ux^v e^{2t+1}_i + C''(a,b,i,u,v) y^{u+1}x^{v-1} e^{2t+1}_{i+1}$$
for $0 \le u \le b-2, 1 \le v \le a-1$ and $i=0,2, \dots, 2t$, and
$$y^{b-1} e^{2t+1}_i + C'''(a,b,i,u,v) x^{a-1} e^{2t+1}_{i-1}$$
for $i=2,4, \dots, 2t$, where $C''$ and $C'''$ are suitable
scalars. The $t$ latter elements also belong to $\Ker
\delta_{2t+1}^{\nu}$, but among the $(a-1)(b-1)(t+1)$ first elements
there are some combinations that are not mapped to zero. Namely, we
must discard the $b-1$ elements for which $i=0$ and $v=a-1$, since we
showed above that $y^ux^{a-1}e^{2t+1}_0$ maps to zero for $0 \le u \le
b-2$. Similarly, we must discard the $a-1$ combinations for which
$i=2t$ and $u=b-2$, since $y^{b-1}x^ve^{2t+1}_{2t+1}$ maps to zero for
$0 \le v \le a-2$. However, when $t=0$ then the situations $i=0$ and
$i=2t$ are the same, and the element $y^{b-2}x^{a-1}e^1_0 +
C''(a,b,i,u,v)y^{b-1}x^{a-2}e^1_1$ has been discarded twice. Thus the
total number of nontrivial linear combinations is
$(a-1)(b-1)(t+1)+t-(a-1)-(b-1)$ when $t \ge 1$, and one more when
$t=0$.

What happens when $\cha k$ divides $b$? The element $y^ux^{a-2}e^{2t+1}_1$
is not mapped to zero for any $u$, and it does not ``interfere''
with one of the nontrivial linear combinations. Hence the
dimension of $\Ker \delta_{2t+1}^{\nu}$ is also independent of the
characteristic of $k$.

In total we see that the dimension of $\Ker \delta_{2t+1}^{\nu}$
differs from that in the homology case since we need to subtract
$(a-1)+(b-1)$ when $t \ge 1$ and $(a-1)+(b-1)-1$ when $t=0$. Thus
the dimension is given by
$$\dim_k \Ker \delta_{2t+1}^{\nu} = \left \{
\begin{array}{ll}
ab+2 & \text{when } t=0 \\
abt+ab+1 & \text{when } t \ge 1.
\end{array} \right.$$

\sloppy We can now calculate the positive degree cohomology groups. We have
$\dim_k \Ker \delta_1^{\nu} = ab+2$, and since $\dim_k \Ker
\delta_2^{\nu} = 2ab$ we must have $\dim_k \Im \delta_2^{\nu} = ab$,
giving
$$\dim_k \HH^1(A) = \dim_k \Ker \delta_1^{\nu} - \dim_k \Im
\delta_2^{\nu} = 2.$$
Furthermore, since $\dim_k \Ker \delta_3^{\nu} = 2ab+1$, we must have
$\dim_k \Im \delta_3^{\nu} = 2ab-1$, giving
$$\dim_k \HH^2(A) = \dim_k \Ker \delta_2^{\nu} - \dim_k \Im
\delta_3^{\nu} = 1.$$
Similarly, direct computations show that the cohomology groups
$\HH^n(A)$ vanish when $n \ge 3$, thereby completing the proof.
\end{proof}

When the commutator element $q$ is a root of unity, then it is not
hard to see that the dimensions of infinitely many of the kernels in
the complex we used to compute (co)homology will increase. Therefore
the Hochschild homology of $A$ is still nonzero in all degrees,
while it is no longer true that all the higher Hochschild cohomology
groups vanish. We record this fact in the final result, which also
gives the multiplicative structure of the Hochschild cohomology ring
when $q$ is a root of unity.

\begin{theorem}\label{multiplication}
The Hochschild cohomology ring $\HH^*(A)$ is finite dimensional if
and only if $q$ is not a root of unity. When this is the case, the
algebra is isomorphic to the (five dimensional graded) fibre product
$$k[U]/(U^2) \times_k k \langle V,W \rangle / (V^2,VW+WV,W^2),$$
where $U$ is in degree zero and $V$ and $W$ are in degree one.
\end{theorem}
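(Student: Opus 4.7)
The plan splits into handling the equivalence and then the ring structure. For the equivalence, the implication ``$q$ not a root of unity implies $\HH^*(A)$ finite dimensional'' is immediate from Theorem \ref{cohomology}, which gives $\dim_k\HH^*(A)=5$. For the converse, if $q$ is a root of unity of some finite order $d$, I would revisit the complex computing $\HH^*(A)=\Tor^{A^{\e}}_*({_{\nu}A_1},A)$ described before Theorem \ref{homology} and observe that the scalars $K_1^{\nu},\ldots,K_4^{\nu}$, together with the coefficients of the form $q^r-q^s$ appearing in $\delta_n^{\nu}$, now vanish on infinite arithmetic progressions of the parameters $(t,i,u,v)$. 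Tracking these extra vanishings produces additional basis elements in $\Ker\delta_n^{\nu}$ not absorbed by $\Im\delta_{n+1}^{\nu}$ for infinitely many $n$, yielding $\HH^n(A)\neq 0$ for infinitely many $n$. This is precisely the observation announced just before the theorem.

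For the ring structure, assume henceforth that $q$ is not a root of unity, so Theorem \ref{cohomology} already gives $\dim\HH^*(A)=5$ concentrated in degrees at most $2$. I would pick explicit cocycle representatives: in degree $0$ the center is spanned by $1$ and $U=y^{b-1}x^{a-1}$; in degree $1$, two classes $V,W$ independent modulo coboundaries in the $(ab+2)$-dimensional $\Ker\delta_1^{\nu}$; in degree $2$, a class spanning the one-dimensional $\HH^2(A)$. Two of the defining relations of the fibre product are then free: $U^2=y^{2(b-1)}x^{2(a-1)}=0$ in $A$ (using $a,b\ge 2$), and $VW+WV=0$ by graded commutativity of the cup product on $\HH^*(A)$. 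Graded commutativity further gives $2V^2=2W^2=0$, hence $V^2=W^2=0$ already whenever $\cha k\ne 2$.

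The remaining step, which is the main obstacle, is to verify $V^2=W^2=0$ also in characteristic two, to show $UV=UW=0$, and to confirm $VW\ne 0$ so that it spans $\HH^2(A)$. For the products with $U$, I would use that the cup product by a central element $z\in\HH^0(A)$ on a cocycle $f\colon P_n\to {_{\nu}A_1}$ is pointwise multiplication $p\mapsto z\cdot f(p)$; since $U$ lies in the socle of $A$ it annihilates $\rad A$, so once one checks that $V$ and $W$ admit representatives valued in $\rad A\cdot {_{\nu}A_1}$ the relations $UV=UW=0$ follow. For $V^2$ and $W^2$ in characteristic two, and for $VW\ne 0$, the efficient route is to lift $V$ and $W$ to chain maps $P_{\bullet}\to P_{\bullet}[1]$, compose the lifts, and identify the resulting $2$-cocycles in the one-dimensional $\HH^2(A)$ by evaluating on a single convenient generator $f^2_i$. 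Gathering everything, the generators $1,U,V,W,VW$ with these relations exhibit $\HH^*(A)$ as the claimed fibre product $k[U]/(U^2)\times_k k\langle V,W\rangle/(V^2,VW+WV,W^2)$.
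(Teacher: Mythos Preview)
Your overall strategy matches the paper's: use Theorem~\ref{cohomology} and the observation preceding the statement for the equivalence, identify $\HH^0(A)$ with the center spanned by $1$ and $U=y^{b-1}x^{a-1}$, pick degree-one generators, use graded commutativity for $VW+WV=0$, show $UV=UW=0$ via the socle argument, and verify $VW\neq 0$ by lifting to a chain map.

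One point to straighten out: you describe degree-one cocycles as lying in $\Ker\delta_1^{\nu}$ and as maps $P_n\to {_{\nu}A_1}$. The identification $\HH^n(A)\cong\Tor_n^{A^{\e}}({_{\nu}A_1},A)$ used before Theorem~\ref{homology} is only an isomorphism of vector spaces; the cup product lives naturally on $\Hom_{A^{\e}}(P_\bullet,A)$. The paper therefore works directly in the Hom picture, defining explicit bimodule maps $g,h\colon P_1\to A$ by $g(f^1_0)=y$, $g(f^1_1)=0$ and $h(f^1_0)=0$, $h(f^1_1)=x$. With these concrete representatives your socle argument for $UV=UW=0$ goes through verbatim (the images $x,y$ lie in $\rad A$), and the lift of $g$ to $g_1\colon P_2\to P_1$ followed by composition with $h$ gives $h\circ g_1(f^2_1)=qyx\neq 0$, showing $hg\neq 0$ in $\HH^2(A)$.

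On $V^2=W^2=0$: the paper simply invokes graded commutativity of $\HH^*(A)$ and does not treat characteristic two separately. Your caution here is reasonable, but note that graded commutativity of Hochschild cohomology arises from a cup-one product on the Hochschild cochain complex (this is part of the Gerstenhaber structure), which forces odd-degree classes to square to zero in all characteristics. So the extra computation you propose for $\cha k=2$ is not needed, though carrying it out via your chain-map lift would of course also work.
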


\begin{proof}
Suppose $q$ is not a root of unity. Recall first the initial part
$$P_2 \xrightarrow{d_2} P_1 \xrightarrow{d_1} P_0 \xrightarrow{\mu} A
\to 0$$
of the projective bimodule resolution of $A$, where $\mu$ is the
multiplication map. The maps $d_1$ and $d_2$ are defined on generators
as follows:
$$\begin{array}{lrcl}
d_1 \colon & f^1_0 & \mapsto & \left [ (1 \otimes y)-(y \otimes 1)
\right ] f^0_0 \\
& f^1_1 & \mapsto & \left [ (1 \otimes x)-(x \otimes 1) \right ] f^0_0
\\
\\
d_2 \colon & f^2_0 & \mapsto & \left [ (1 \otimes y^{b-1}) + (y
  \otimes y^{b-2}) + \cdots + (y^{b-1} \otimes 1) \right ] f^1_0 \\
& f^2_1 & \mapsto & \left [ q(1 \otimes x) - (x \otimes 1) \right ]
f^1_0 + \left [ (1 \otimes y) - q(y \otimes 1) \right ] f^1_1 \\
& f^2_2 & \mapsto & \left [ (1 \otimes x^{a-1}) + (x \otimes x^{a-2})
  + \cdots + (x^{a-1} \otimes 1) \right ] f^1_1.
\end{array}$$
Define two bimodule maps $g \colon P_1 \to A$ and $h
\colon P_1 \to A$ by
$$\begin{array}{lrcl}
g \colon & f^1_0 & \mapsto & y \\
& f^1_1 & \mapsto & 0 \\
\\
h \colon & f^1_0 & \mapsto & 0 \\
& f^1_1 & \mapsto & x. \\
\end{array}$$
One checks directly that $g \circ d_2 = 0 = h \circ d_2$, and that
neither of the two maps are liftable through $d_1$. Consequently they
represent the two basis elements of $\HH^1(A) = \Ext_{A^{\e}}^1(A,A)$.

We may identify the degree zero part of $\HH^*(A)$ with the center
of $A$, the two dimensional vector space spanned by the elements $1$
and $y^{b-1}x^{a-1}$. The latter element annihilates both $g$ and
$h$, hence $\HH^*(A)$ is isomorphic to the $k$-fibre product of the
algebra generated by $y^{b-1}x^{a-1}$ with the algebra generated by
$g$ and $h$. Since the Hochschild cohomology ring of a finite
dimensional algebra is always graded commutative (cf.\
\cite[Corollary 1.2]{Snashall}), both $g$ and $h$ square to zero.
Therefore, as $\HH^2(A)$ is one dimensional, we are done if we can
show that the product $hg \in \HH^2(A)$ is nonzero.

Define a bimodule map $g_0 \colon P_1 \to P_0$ by
$$\begin{array}{lrcl}
g_0 \colon & f^1_0 & \mapsto & (y \otimes 1) f^0_0 \\
& f^1_1 & \mapsto & 0.
\end{array}$$
It is not hard to see that there exists an element $w \in A^{\e}$ such
that the map $g_1 \colon P_2 \to P_1$ defined by
$$\begin{array}{lrcl}
g_1 \colon & f^2_0 & \mapsto & w f^1_0 \\
& f^2_1 & \mapsto & q(y \otimes 1)f^1_1 \\
& f^2_2 & \mapsto & 0
\end{array}$$
gives a commutative diagram
$$\xymatrix{
P_2 \ar[d]^{g_1} \ar[r]^{d_2} & P_1 \ar[d]^{g_0} \ar[dr]^{g} \\
P_1 \ar[r]^{d_1} & P_0 \ar[r]^{\mu} & A }$$
The product $hg \in \HH^2(A)$ is then represented by the composite map
$h \circ g_1$, under which the images of the generators in $P_2$ are
given by
$$\begin{array}{lrcl}
h \circ g_1 \colon & f^2_0 & \mapsto & 0 \\
& f^2_1 & \mapsto & qyx \\
& f^2_2 & \mapsto & 0.
\end{array}$$
This map is not liftable through $d_2$, and therefore it represents a
nonzero element of $\HH^2(A)$. Consequently the product $hg$ is nonzero.
\end{proof}

\end{document}